\newtheorem{thm}{Theorem}[section]
\newtheorem*{thm*}{Theorem}
\newtheorem{lem}[thm]{Lemma}
\theoremstyle{definition}
\numberwithin{equation}{section}
\newcommand{\F}{\mathcal F}
\newcommand{\newabstract}[1]{%
	\par\bigskip
	\csname otherlanguage*\endcsname{#1}%
	\csname captions#1\endcsname
	\item[\hskip\labelsep\scshape\abstractname.]
}
\begin{document}

	\baselineskip=17pt

	\title[Note on large  quadratic character sums]{Note on large quadratic character sums}

	\author{Zikang Dong\textsuperscript{1}}
\author{Yutong Song\textsuperscript{2,4}}
\author{Ruihua Wang\textsuperscript{3}}
    \author{Shengbo Zhao\textsuperscript{2}}
	\address{1.School of Mathematical Sciences, Soochow University, Suzhou 215006, P. R. China}
	\address{2.School of Mathematical Sciences, Key Laboratory of Intelligent Computing and Applications(Ministry of Education), Tongji University, Shanghai 200092, P. R. China}
   \address{3.School of Fundamental Sciences, Hainan Bielefeld University of Applied Sciences, Danzhou 578101, P. R. China}
    \address{4.	Graz University of Technology, Institute of Analysis and Number Theory, Steyrergasse 30/II, 8010 Graz, Austria}
	\email{zikangdong@gmail.com}

	\email{99yutongsong@gmail.com}
    \email{ruih.wan9@gmail.com}
	\email{shengbozhao@hotmail.com}

	\date{\today}
	
	\begin{abstract} 
		In this article, we investigate the conditional large values of quadratic Dirichlet character sums. We prove an Omega result for quadratic character sums under the assumption of the generalized Riemann hypothesis.
	\end{abstract}

	\subjclass[2020]{Primary 11L40, 11N25.}
	
	\maketitle
	
\section{Introduction}

Given a large prime number $q$ and a Dirichlet character $\chi\,(\textup{mod }q)$, we have the following P\'olya–Vinogradov inequality uniformly for all $x>0$:
\begin{equation*}
    \sum\limits_{n\leq x}\chi(n)\ll\sqrt{q}\log Q,
\end{equation*}
where $Q=q$ unconditionally, and $Q=\log q$ assuming the Generalized Riemann Hypothesis (GRH). According to a result of Paley \cite{Paley}, the conditional bound is optimal up to the choice of an implied constant.

We  define
\begin{equation*}
    \Delta_q(x):=\max_{\chi_0\neq\chi\,(\text{mod } q)} \left|\sum\limits_{n\leq x}\chi(n) \right|.
\end{equation*}
A thorough understanding of $\Delta_q(x)$ provides important information about the gap between the upper and lower bounds of character sums. Granville and Soundararajan studied this in detail in \cite{GS01}, distinguishing the results by comparing $x$ with $\exp(\sqrt{q})$ (Theorems 4 and 5 in \cite{GS01}). For simplicity, we call the character sum “short’’ when $x\leq\exp((\log q)^{\frac{1}{2}-\epsilon})$ and “long’’ when $x\geq\exp((\log q)^{\frac{1}{2}+\epsilon})$. We use $\epsilon$
to denote an arbitrarily small positive number, which may represent different values in different contexts.

The result of Granville and Soundararajan on 
\begin{equation*}
    \Delta_q(x):=\max_{\chi_0\neq\chi\,(\text{mod } q)} \left|\sum\limits_{n\leq x}\chi(n) \right|
\end{equation*}
has inspired many subsequent studies. In \cite{Mun}, Munsch showed that 
\begin{equation*}
    \Delta_q(x)\ge \Psi\bigg(x,\big(\tfrac14+o(1)\big)\frac{\log q\log_2q}{\max\{\log_2x-\log_3q,\log_3q\}}\bigg)
\end{equation*}
when $\log q\leq x\leq \exp(\sqrt{\log q})$. Here and throughout we use $\log_j(\cdot)$ as the $j$-th iteration of the logarithmic function. For $A\in \mathbb{R}$, Hough proved in \cite{Hou} that when $x=\exp(\tau\sqrt{\log q\log_2q})$, we have 
\[
\Delta_q(x)\ge \sqrt{x}\exp\bigg((1+o(1))A(\tau+\tau')\sqrt{\frac{\log X}{\log_2 X}}\bigg),
\]
where $\tau$ and $\tau'$ are numbers depending only on $A$. Tenenbaum and de la Bret\`eche showed in \cite{BT} that when $\exp((\log q)^{\frac12+\delta})\le x\le q$, we have 

\[\Delta_q(x)\ge \sqrt x\exp\bigg((\sqrt2+o(1))\sqrt{\frac{\log( q/x)\log_3(q/x)}{\log_2(q/x)}}\bigg).\]

In this paper, we focus on the asymptotic lower bound for real primitive character sums. Real characters often exhibit boundary behaviors and are hard to handle due to their irregular properties. For example, Paley \cite{Paley} showed that the P\'olya–Vinogradov inequality, when using real characters and assuming GRH, is optimal. Let $\mathcal{F}$ denote the set of all fundamental discriminants. In this paper, we are interested in 
$$\max_{X<|d|\le 2X\atop d\in\F}\sum_{n\le x}\chi_d(n).$$
Granville and Soundararajan studied this sum as an analogue of the general character sum in \cite{GS}. See Theorems 9--11 there. For the distribution and structure for larger values of this quantity, we refer to \cite{DWZ23,Lam24}.

Our main result is the following theorem. 
\begin{thm}\label{thm1.2}
Assume GRH. Let $\exp\left(4\sqrt{\log X\log_2X}\log_3X\right)\le x\le \exp((\log X)^{\frac12+\varepsilon})$.
Then we have
\[ \max_{X<|d|\le 2X\atop d\in\F}\sum_{n\le |d|/x}\chi_d(n)\ge \sqrt{\frac Xx}\exp\bigg((\tfrac{\sqrt2}{2}+o(1))\sqrt{\frac{\log X}{\log_2 X}}\bigg).\]
\end{thm}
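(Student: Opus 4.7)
The plan is to adapt the resonator method of Hough, Granville--Soundararajan, and Tenenbaum--de la Bret\`eche \cite{Hou,GS,BT}, developed in the general character setting, to the family of quadratic characters. The expected halving of the exponent constant (from $\sqrt2$ in \cite{BT} to $\sqrt2/2$ here) reflects the restriction to real characters, where only ``diagonal'' resonance modes survive the orthogonality relation on the family of fundamental discriminants.

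The first step is Polya's Fourier expansion. Write $S_d:=\sum_{n\le|d|/x}\chi_d(n)$. Since $\chi_d$ is primitive of conductor $|d|$ with Gauss sum $\tau(\chi_d)=\varepsilon_d\sqrt{|d|}$ and $\varepsilon_d\in\{1,i\}$, for $M$ a small power of $X$ one has
\[S_d = \frac{\tau(\chi_d)}{2\pi i}\sum_{1\le|m|\le M}\frac{\chi_d(m)}{m}\bigl(1-e(-m/x)\bigr) + O\Bigl(\frac{|d|\log|d|}{M}\Bigr);\]
combining $\pm m$ according to the parity of $\chi_d$, the weight becomes sine-like for $d>0$ and cosine-like for $d<0$, and in both cases $S_d$ is real-valued. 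The prefactor $\sqrt{|d|}$ is what eventually produces the $\sqrt{X/x}$ factor in the theorem, once the remaining Dirichlet polynomial reaches size $\asymp 1/\sqrt{x}$.

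The second step is a Hough-type resonator $R(d)=\sum_{\ell\in\mcl}r(\ell)\chi_d(\ell)$, with $r$ multiplicative and supported on squarefree $\ell$ whose prime factors lie in an optimal dyadic window $\mcp\subset[y_1,y_2]$. Restricting to $d$ of one fixed sign, I would consider the quotient
\[\frac{\sums_{X<|d|\le 2X}R(d)^2\, S_d}{\sums_{X<|d|\le 2X}R(d)^2},\]
which by pigeonhole produces some $d$ with $|S_d|$ at least this ratio. Substituting the Polya identity and using the orthogonality relation, valid under GRH,
\[\sums_{X<|d|\le 2X}\chi_d(N)=c\,X\!\!\prod_{p\mid N}\!\tfrac{p}{p+1}\cdot\mathbf{1}_{N=\square}+O\bigl(X^{1/2+\varepsilon}N^{\varepsilon}\bigr),\]
after expanding $R(d)^2=\sum_{\ell_1,\ell_2}r(\ell_1)r(\ell_2)\chi_d(\ell_1\ell_2)$ the numerator concentrates on triples with $\ell_1\ell_2 m=\square$. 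Writing $\ell_i=ab_i$ with $(b_1,b_2)=1$ forces $m=b_1b_2k^2$, and the ratio simplifies to an Euler product over $\mcp$ whose Rankin-trick optimization yields the exponential factor $\exp\bigl((\sqrt2/2+o(1))\sqrt{\log X/\log_2 X}\bigr)$.

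The main technical obstacle is balancing three competing scales: the resonator length $\sum_\ell r(\ell)^2$, the Polya truncation $M$, and the GRH error $X^{1/2+\varepsilon}$ in the orthogonality. The resonator must be long enough to capture the Rankin bound but short enough that the off-diagonal contributions remain controlled, and $M$ must be large enough for the Polya truncation to be harmless yet small enough that $|d|\log|d|/M$ is absorbed into the main term. The threshold $x\ge\exp(4\sqrt{\log X\log_2X}\log_3 X)$ appearing in the hypothesis is precisely where the Polya weight's contribution on $m\le x$ becomes negligible compared with the resonant contribution on $m\in[x,M]$; for smaller $x$, resonating the tail $m\le x$ would force a resonator longer than the GRH comfort zone admits.
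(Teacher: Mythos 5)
Your skeleton---P\'olya's expansion, a quadratic resonator $R(d)$, and GRH orthogonality over fundamental discriminants---matches the paper's, but at the decisive step you diverge: you resonate the \emph{first} power, $\sum_d R(d)^2 S_d$, while the paper resonates the \emph{square}. It sets $C_d(z)=\sum_{|m|\le z}\frac{\chi_d(m)}{m}(1-\cos(2\pi m/x))$, bounds $\max_d C_d(z)^2\ge M_2/M_1$ with $M_2=\sum_d R(d)^2C_d(z)^2$, reduces the diagonal to the condition $mk=n\ell$, and then quotes a ready-made lower bound for the resulting bilinear form (Lemma \ref{rmrn}, lifted from p.~97 of Hough). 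Your first-moment diagonal $\ell_1\ell_2 m=\square$, $\ell_i=ab_i$, $m=b_1b_2k^2$ is a genuinely different combinatorial reduction, and the functional it produces---after carrying out the $k$-sum against the weight $(1-\cos(2\pi m/x))/m$, roughly $x^{-1/2}\prod_p\bigl(1+2r(p)p^{-1/2}/(1+r(p)^2)\bigr)$---is not the same object as the paper's second-moment form, so its optimization cannot simply be borrowed.

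The gap is that the entire quantitative core is asserted rather than computed. You never specify the resonator: the ``optimal dyadic window'' and the shape of $r(p)$ are exactly where the constant comes from (the paper takes Hough's $r(p)=\lambda/(\sqrt p\log p)$ on $\lambda^2\le p\le\exp((\log\lambda)^2)$ with $\lambda=\sqrt{\log y\log_2y}$, and, crucially, resonator length $y=X^{1/2-\delta}$ forced by the $X^{1/2+\varepsilon}$ GRH error---this is the source of the factor $\sqrt{1/2}$ in the final constant). You never evaluate the Euler product you claim ``yields'' $\exp((\tfrac{\sqrt2}{2}+o(1))\sqrt{\log X/\log_2X})$, nor verify that its mass is supported on $b_1b_2\le x$, which is where the hypothesis $x\ge\exp(4\sqrt{\log X\log_2X}\log_3X)$ must actually be used (your closing heuristic about the threshold is in the right spirit but is not an argument). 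Two further points need care. First, positivity: a first moment only yields a lower bound if all diagonal terms point the same way, so you must fix $d<0$ (odd characters), where the symmetrized weight $(1-\cos(2\pi m/x))/m$ is nonnegative; for $d>0$ the sine weight oscillates and your pigeonhole does not close. Second, once you insert explicit coefficients, the first-moment exponent is governed by $\sum_p r(p)p^{-1/2}$ rather than by the second-moment form optimized in Lemma \ref{rmrn}, and it is not evident that this reproduces the stated constant; you must carry out that optimization (and control the $X^{1/2+\varepsilon}$ error against the chosen resonator length) before $\tfrac{\sqrt2}{2}$ can be claimed.
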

This theorem extends Theorem 1.2 in \cite{lqcs}, and generalizes part of Theorem 3.1 in \cite{Hou}. We use the resonance
method developed by Hilberdink \cite{Hil} and Soundararajan \cite{Sound}. The technical tool Lemma \ref{lem2.2} is due to Darbar and Maiti \cite{DM}, which is much stronger than that (Lemma 4.1) of Granville and Soundararajan in \cite{GS}.

    \section{Preliminary Lemmas}\label{sec2}
    The following Fourier expansion for character sums was first showed by P\'olya.
 \begin{lem}\label{lem2.1}
 Let $\chi({\rm mod}\;q)$ be any primitive character and $0<\alpha<1$. Then we have
$$\sum_{n\le \alpha q}\chi(n)=\frac{\tau(\chi)}{2\pi i}\sum_{1\le|m|\le z}\frac{\overline\chi(m)}{m}(1-e(-\alpha m))+O(1+q\log q/z),$$
where $\tau(\chi):=\sum_{n\le q}\chi(n)e(n/q)$ is the Gauss sum and $e(a):=e^{2\pi ia}$.
\end{lem}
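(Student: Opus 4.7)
The statement is the classical Pólya Fourier expansion for primitive character sums; I would derive it by combining Fourier inversion via the Gauss sum, explicit evaluation of a geometric sum, and truncation using Pólya--Vinogradov.

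Since $\chi$ is primitive, the identity $\tau(\chi)\overline\chi(m)=\sum_{a\bmod q}\chi(a)e(am/q)$ holds for every integer $m$, so Fourier inversion on $\mathbb{Z}/q\mathbb{Z}$ yields $\chi(n)=\frac{\tau(\chi)}{q}\sum_{a\bmod q}\overline\chi(a)e(-an/q)$. Summing over $n\le\alpha q$, exchanging the order of summation, noting that the $a=0$ term vanishes because $\overline\chi(0)=0$, and re-indexing by the common period $q$ of $\overline\chi(a)$ and $e(-an/q)$ give
\[
\sum_{n\le\alpha q}\chi(n)=\frac{\tau(\chi)}{q}\sum_{1\le|m|\le q/2}\overline\chi(m)\,T(m),\qquad T(m):=\sum_{n\le\alpha q}e(-mn/q).
\]
With $N=\lfloor\alpha q\rfloor$, direct evaluation of the geometric sum combined with the expansions $1-e(-m/q)=2\pi i m/q+O((m/q)^2)$ and $e(-mN/q)=e(-m\alpha)+O(|m|/q)$ supplies both the crude bound $|T(m)|\ll q/|m|$ and the uniform approximation
\[
T(m)=\frac{q(1-e(-m\alpha))}{2\pi i m}+O(1)\qquad(1\le|m|\le q/2).
\]

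Substituting the asymptotic for $|m|\le z$ recovers exactly the main term displayed in the lemma. For the tail $z<|m|\le q/2$, partial summation against the Pólya--Vinogradov bound $\bigl|\sum_{|m|\le M}\overline\chi(m)\bigr|\ll\sqrt q\log q$, combined with $|T(m)|\ll q/|m|$ and $|\tau(\chi)|=\sqrt q$, produces a contribution of order $q\log q/z$. The delicate point is the cumulative effect of the $O(1)$ approximation errors in $T(m)$ across $1\le|m|\le z$: a purely termwise bound costs $O(z/\sqrt q)$, which overshoots the desired error for large $z$. To bypass this one treats the approximation error as a smooth bounded function of $m/q$ and invokes partial summation against the same Pólya--Vinogradov estimate, extracting character cancellation from $\overline\chi(m)$. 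Combining the two estimates yields the claimed error $O(1+q\log q/z)$.
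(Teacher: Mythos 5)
The paper offers no internal proof of this lemma: it is quoted directly from Montgomery--Vaughan (\emph{Multiplicative Number Theory I}, Eq.~(9.19), p.~311), so there is no in-paper argument to compare against. Your sketch is the standard derivation of that formula and is essentially the proof found in that reference: the Fourier inversion step is correct for primitive $\chi$, the evaluation $T(m)=q(1-e(-\alpha m))/(2\pi i m)+O(1)$ together with the crude bound $T(m)\ll q/|m|$ for $1\le |m|\le q/2$ is correct, and you rightly identify the cumulative $O(1)$ errors as the delicate point. Two steps should be made explicit if you write this out. First, in the tail $z<|m|\le q/2$ you cannot sum by parts with $T(m)$ itself as the weight, since $T$ has total variation of order $q\log q$ there; you must write $T(m)=\bigl(e(-m(N+1)/q)-e(-m/q)\bigr)/\bigl(e(-m/q)-1\bigr)$, absorb each oscillating numerator into the character sum via the twisted P\'olya--Vinogradov bound $\sum_{m\le M}\overline\chi(m)e(mb/q)\ll\sqrt q\log q$, and apply partial summation only against the smooth monotone factor $1/(e(-m/q)-1)\ll q/|m|$; this yields $O(q\log q/z)$ after multiplying by $|\tau(\chi)|/q=q^{-1/2}$. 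Second, the approximation error $T(m)-q(1-e(-\alpha m))/(2\pi im)$ is not a function of $m/q$ alone but has the shape $e(-\alpha m)g_1(m/q)-g_2(m/q)$ with $g_1,g_2$ smooth and bounded, so the partial summation you invoke needs the additively twisted bound $\sum_{m\le M}\overline\chi(m)e(m\beta)\ll\sqrt q\log q$ for arbitrary real $\beta$; this gives a contribution $O(\log q)=O(q\log q/z)$ in the range $z\le q/2$ (for larger $z$ one should dispose of the superfluous terms $q/2<|m|\le z$ separately). These are standard refinements rather than gaps, and your overall route is sound.
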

\begin{proof}
This is \cite[p.311, Eq. (9.19)]{MVbook}.
\end{proof}
The following conditional estimate for characters has a good error term in use.
    \begin{lem}\label{lem2.2}
	Assuming GRH. Let $n=n_0n_1^2$ be a positive integer with $n_0$ the square-free part of $n$.
	 Then for any $\varepsilon>0$, we obtain
	\begin{align*}
	\sum_{|d|\le X\atop d\in\F} \chi_{d}(n)=\frac{X}{\zeta(2)}\prod_{p|n}\frac{p}{p+1}\mathds{1}_{n=\square}+ O\left(X^{\frac12+\varepsilon}f(n_0)g(n_1)\right),
	\end{align*}
	where   ${\mathds{1}}_{n=\square}$ indicates the indicator function of the square numbers, and
    $$f(n_0)=\exp((\log n_0)^{1-\varepsilon}),\;\;\;\;g(n_1)=\sum_{d|n_1}\frac{\mu(d)^2}{d^{\frac12+\varepsilon}}.$$
\end{lem}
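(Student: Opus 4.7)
The plan is to prove this via quadratic reciprocity, Möbius inversion, and Perron's formula under GRH. First, since $\chi_d$ is a real character, the factorization $n=n_0 n_1^2$ gives
\[
\chi_d(n) = \chi_d(n_0)\,\chi_d(n_1)^2 = \chi_d(n_0)\,\mathds{1}_{(d,\,n_1)=1},
\]
and Möbius inversion $\mathds{1}_{(d,n_1)=1}=\sum_{a\mid (d,n_1)}\mu(a)$ reduces the problem to estimating, for each squarefree divisor $a$ of $n_1$, the sum
\[
S_a(X):=\sum_{\substack{|d|\le X,\ d\in\F\\ a\mid d}}\chi_d(n_0).
\]
The factor $g(n_1)=\sum_{a\mid n_1}\mu(a)^2/a^{1/2+\varepsilon}$ will eventually arise from summing a per-$a$ error of size $O((X/a)^{1/2+\varepsilon} f(n_0))$ over these divisors.

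Next, to handle $S_a(X)$, I would use the fact that for a fundamental discriminant $d$ the value $\chi_d(n_0)$ equals the Kronecker symbol $(d/n_0)$, which by quadratic reciprocity can be rewritten, up to an explicit sign depending on $d\pmod 8$ and $n_0\pmod 4$, as $\psi(d)$ for a real primitive Dirichlet character $\psi$ whose conductor divides $4n_0$ (or $8n_0$). The restriction $d\in\F$ can then be opened by a second Möbius inversion that extracts the squarefree part of $d$, writing it as a sum over $\ell$ with $\ell^2\mid d$ weighted by $\mu(\ell)$, combined with explicit residue conditions modulo $4$ or $8$. After these reductions, the problem reduces to estimating partial sums of the form $\sum_{m\le Y}\psi(m)$ for various $Y\ll X/a$ and various real primitive characters $\psi$ of conductor $O(n_0)$.

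To bound these partial sums I would apply Perron's formula and shift the contour from $\Re s=1+\varepsilon$ down to $\Re s=1/2+\varepsilon$. The residue at $s=1$ vanishes unless $\psi$ is principal, which happens precisely when $n_0=1$, i.e.\ when $n$ is a perfect square; in that case the combined residue across all Möbius-inversion steps and the sum over $a\mid n_1$ produces the announced main term $X\zeta(2)^{-1}\prod_{p\mid n}p/(p+1)$. For the shifted contour integral, the conditional bound $|L(1/2+it,\psi)|\ll \exp(C\log n_0/\log_2 n_0)$ (in the style of Soundararajan), which under GRH is much stronger than the convexity bound $n_0^\varepsilon$, yields a contribution of size $Y^{1/2+\varepsilon}f(n_0)$ after integration against $Y^s/s$ over a suitably truncated vertical line.

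The main obstacle will be the careful bookkeeping needed to combine the nested Möbius inversions (over $a\mid n_1$ and over $\ell^2\mid d$), the conductor shift from $n_0$ to a divisor of $8n_0$ coming from reciprocity, and the sign corrections between $d>0$ and $d<0$ and between $d\equiv 1\pmod 4$ and $d\equiv 0\pmod 4$. A secondary technical point is obtaining the refined $f(n_0)=\exp((\log n_0)^{1-\varepsilon})$ dependence rather than the weaker $n_0^\varepsilon$: this requires invoking the conditional subconvexity-type bound for $L(s,\psi)$ on the critical line and integrating carefully along the shifted contour, so that the $t$-aspect dependence is absorbed into $X^\varepsilon$. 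Once assembled, the total error is
\[
X^{1/2+\varepsilon}f(n_0)\sum_{a\mid n_1}\frac{\mu(a)^2}{a^{1/2+\varepsilon}} = X^{1/2+\varepsilon}f(n_0)g(n_1),
\]
as claimed.
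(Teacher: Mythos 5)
The paper offers no proof of this lemma: its ``proof'' is a one-line citation of Lemma 1 of \cite{DM}. Your proposal is therefore necessarily a different route --- it is an actual proof sketch, and it follows what is essentially the standard argument for statements of this type (the same skeleton as Lemma 4.1 of \cite{GS}, and presumably of \cite{DM} itself): split off the square part of $n$ via $\chi_d(n_1)^2=\mathds{1}_{(d,n_1)=1}$ and a M\"obius inversion over $a\mid n_1$ (which is indeed what produces $g(n_1)$ after summing the per-$a$ errors $(X/a)^{1/2+\varepsilon}$); detect the fundamental-discriminant condition by a second M\"obius inversion together with congruence conditions modulo $4$ or $8$; view $d\mapsto\chi_d(n_0)$ as a real character of conductor $O(n_0)$, principal exactly when $n=\square$, whence the main term $\frac{X}{\zeta(2)}\prod_{p\mid n}\frac{p}{p+1}$; and bound the remaining non-principal character sums under GRH by Perron's formula and a contour shift. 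All of this is sound in outline.

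There is, however, one concrete quantitative gap, located exactly at the point you dismiss as ``a secondary technical point'': the bound you invoke, $|L(\tfrac12+it,\psi)|\ll\exp(C\log n_0/\log_2 n_0)$, does \emph{not} yield the factor $f(n_0)=\exp((\log n_0)^{1-\varepsilon})$. For any fixed $\varepsilon>0$ one has $(\log n_0)^{1-\varepsilon}=o(\log n_0/\log_2 n_0)$, so $\exp(C\log n_0/\log_2 n_0)$ is eventually \emph{larger} than $f(n_0)$, and inserting it into the shifted contour integral gives only the weaker error $X^{\frac12+\varepsilon}\exp(C\log n_0/\log_2 n_0)\,g(n_1)$. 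To obtain the stated $f(n_0)$ you must use the fact that you shifted only to $\Re s=\tfrac12+\varepsilon$ rather than to the critical line: under GRH, for $\sigma=\tfrac12+\varepsilon$ one has
$$\log|L(\sigma+it,\psi)|\ll\frac{\big(\log q(|t|+2)\big)^{2-2\sigma}}{\log_2 q(|t|+2)}\ll\big(\log q(|t|+2)\big)^{1-2\varepsilon},$$
and it is this off-the-line estimate, applied with $q\ll n_0$ and with the $t$-dependence absorbed into $X^{\varepsilon}$ after truncating Perron at height $X$, that produces $\exp((\log n_0)^{1-\varepsilon})$. A further small slip: the bound $n_0^{\varepsilon}$ you call ``the convexity bound'' is the Lindel\"of bound; convexity only gives $n_0^{1/4+\varepsilon}$. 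With the $L$-value estimate corrected as above, your outline does deliver the lemma, modulo the reciprocity and sign bookkeeping you already acknowledge.
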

\begin{proof}
    This follows directly from Lemma 1 of \cite{DM}.
\end{proof}
On the one hand, it is clear that 
$$f(n_0)\le n_0^\varepsilon\le n^\varepsilon,\;\;\;\;g(n_1)\le n_1^\varepsilon\le n^\varepsilon.$$
On the other hand, if we denote the largest prime factor of $n$ by $P_+(n)$, then $n_0,n_1\le \prod_{p\le P_+(n)}p$.
So easily we have
$$f(n_0)\le\exp\big(P_+(n)^{1-\varepsilon}\big),\;\;\;\;\;g(n_1)\le\exp\big(P_+(n)^{\frac12-\varepsilon}\big).$$

The following lemma plays a key role in the proof of Theorem \ref{thm1.2}.
\begin{lem}\label{rmrn}    Let $Y$ be large and $\lambda=\sqrt{\log Y\log_2 Y}$. Define the multiplicative function $r$ supported on square-free integers and for any prime $p$:$$r(p)=\begin{cases}   \frac{\lambda}{\sqrt p \log p}, &  \lambda^2\le p\le \exp((\log\lambda)^2),\\   0, & {\rm otherwise.}\end{cases}$$
If $\log Y\ge \log W>3\lambda\log_2\lambda$, then we have
\begin{align*}   \sum_{m_1,n_1\le  W\atop (m_1,n_1)=1}\frac{m_1n_1r(m_1)(n_1)}{\max\{m_1,n_1\}^3}\sum_{d\le\frac{Y}{\max\{m_1,n_1\}}\atop(d,m_1n_1)=1}r(d)^2\Big/\prod_{p}&(1+r(p)^2)\\&\ge \exp{\bigg((2+o(1))\sqrt{\frac{\log Y}{\log_2 Y}}}\bigg).\end{align*}\end{lem}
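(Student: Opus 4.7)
The plan is to lower-bound the LHS of the lemma by replacing the non-multiplicative weight $m_1 n_1 / \max\{m_1, n_1\}^3$ with the multiplicative proxy $1/\sqrt{m_1 n_1}$. The key pointwise inequality is
\[
\frac{m_1 n_1}{\max\{m_1, n_1\}^3} = \frac{\min\{m_1, n_1\}}{\max\{m_1, n_1\}^2} \ge \frac{1}{8\sqrt{m_1 n_1}}
\quad\text{whenever } \tfrac12 \le m_1/n_1 \le 2,
\]
so restricting the outer sum to these ``balanced'' pairs gives
\[
\text{LHS} \ge \frac{1}{8\prod_p(1+r(p)^2)} \sum_{\substack{(m_1, n_1) = 1\\ m_1, n_1 \le W\\ 1/2 \le m_1/n_1 \le 2}} \frac{r(m_1) r(n_1)}{\sqrt{m_1 n_1}} \sum_{\substack{(d, m_1 n_1) = 1\\ d \le Y/\max\{m_1, n_1\}}} r(d)^2.
\]

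Dropping in addition the size truncations and the balanced restriction, the resulting triple sum factors as an Euler product because each prime $p \in \mathrm{supp}(r)$ may independently be placed in $m_1$, in $n_1$, in $d$, or in none with respective local weights $r(p)/\sqrt p$, $r(p)/\sqrt p$, $r(p)^2$, and $1$:
\[
\sum_{\substack{(m_1, n_1, d)\\ \text{pw coprime, sqfree}}} \frac{r(m_1) r(n_1) r(d)^2}{\sqrt{m_1 n_1}} = \prod_p\Bigl(1 + \frac{2 r(p)}{\sqrt p} + r(p)^2\Bigr).
\]
Dividing by $\prod_p(1+r(p)^2)$ and using $r(p) \le 1/(2\log\lambda) = o(1)$ on the support yields
\[
\prod_p \frac{1 + 2r(p)/\sqrt p + r(p)^2}{1 + r(p)^2} = \exp\Bigl((1+o(1)) \cdot 2\sum_p \frac{r(p)}{\sqrt p}\Bigr),
\]
and partial summation from $\pi(x) \sim x/\log x$ gives
\[
\sum_p \frac{r(p)}{\sqrt p} = \lambda \hspace{-1em}\sum_{\lambda^2 \le p \le \exp((\log\lambda)^2)}\hspace{-1em} \frac{1}{p \log p} = (1+o(1))\frac{\lambda}{2\log\lambda} = (1+o(1))\sqrt{\frac{\log Y}{\log_2 Y}},
\]
producing the target exponent $(2+o(1))\sqrt{\log Y/\log_2 Y}$.

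What remains is to show that the passage from the unrestricted Euler product to the balanced, size-truncated sum of the lemma costs only a factor $\exp(o(\sqrt{\log Y/\log_2 Y}))$. Consider the probability measure on pairwise coprime triples $(m_1, n_1, d)$ with per-prime factors proportional to $(r(p)/\sqrt p, r(p)/\sqrt p, r(p)^2, 1)$. The random variable $\log m_1 - \log n_1$ has mean zero and variance $2 \sum_p (\log p)^2 r(p)/\sqrt p \asymp \lambda \log^2 \lambda$ by Mertens' estimates, so $\mathbb{P}(\tfrac12 \le m_1/n_1 \le 2) \gg 1/(\sqrt\lambda \log \lambda)$, a polynomial factor in $\lambda$. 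The outer cut-off $m_1, n_1 \le W$ is essentially invisible because $\mathbb{E}[\log m_1] \asymp \lambda \log_2 \lambda < \log W/3$, and Chernoff yields super-polynomial tails; meanwhile the inner cut-off $d \le Y/\max\{m_1, n_1\}$ costs only a polynomial factor since $\mathbb{E}[\log d] \sim \log Y$ has standard deviation $\asymp \lambda \sqrt{\log_2 \lambda}$, and $\log \max\{m_1, n_1\} = O(\lambda \log_2 \lambda)$ is at most $O(\sqrt{\log_2 \lambda})$ standard deviations below $\mathbb{E}[\log d]$. All three losses contribute only $O(\log\lambda)$ on the logarithmic scale, which is $o(\lambda/\log\lambda) = o(\sqrt{\log Y/\log_2 Y})$ and is therefore absorbed into the stated $o(1)$. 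The main obstacle is to make these concentration estimates rigorous while respecting the coprimality conditions; the cleanest route is to apply Rankin's trick with parameters $\beta, \beta' \asymp 1/\log\lambda$ to the outer and inner Euler products, converting the cut-offs into absolutely convergent products, and then optimise.
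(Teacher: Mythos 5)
The paper does not actually prove this lemma --- its ``proof'' is a citation to page 97 of \cite{Hou} --- so your argument has to stand on its own. Most of it does: the pointwise inequality on balanced pairs, the Euler-product factorisation of the unrestricted triple sum, the ratio $\exp\bigl((1+o(1))\,2\sum_p r(p)/\sqrt p\bigr)$, and the evaluation $\sum_p r(p)/\sqrt p=(1+o(1))\lambda/(2\log\lambda)=(1+o(1))\sqrt{\log Y/\log_2Y}$ are all correct. Your treatment of the two size truncations is also essentially right, though for $d\le Y/\max\{m_1,n_1\}$ the sign matters and you gloss over it: one must check that $\mathbb{E}[\log d]\approx\lambda^2\sum_p 1/(p\log p)=\log Y\bigl(1-\Theta(\log_3Y/\log_2Y)\bigr)$ sits \emph{below} the cut-off $\log Y-O(\lambda\log_2\lambda)$ (it does, since $\lambda\log_2\lambda=o(\log Y\log_3Y/\log_2Y)$); had the mean exceeded $\log Y$ by that much, the truncation would cost $\exp(-c\log Y\log_3 Y/\log_2^2Y)$, which dwarfs the main term. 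Likewise the hypothesis $\log W>3\lambda\log_2\lambda$ enters through Chernoff with $t\asymp 1/\log^2\lambda$, where the constant $3>e$ is exactly what makes the tail $\exp(-c\lambda\log_2\lambda/\log^2\lambda)$ negligible.

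The genuine gap is the anti-concentration claim $\mathbb{P}(\tfrac12\le m_1/n_1\le2)\gg1/(\sqrt\lambda\log\lambda)$. Mean zero and variance $\asymp\lambda\log^2\lambda$ say nothing about the mass in a window of width $2\log2$: every nonzero increment $\pm\log p$ of $\log m_1-\log n_1$ has magnitude at least $2\log\lambda\to\infty$, so landing within $O(1)$ of the origin is a local-limit-theorem statement, and a priori the probability could degenerate all the way to $\mathbb{P}(m_1=n_1=1)=\exp(-\Theta(\lambda/\log\lambda))$, which would wipe out the entire main term. Rankin's trick, your proposed rigorization, converts truncations into convergent Euler products and so gives \emph{upper} bounds for tails; it cannot produce a \emph{lower} bound for the mass in a short window, so it does not repair this step. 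The repair is to widen the window instead: from the identity $m_1n_1/\max\{m_1,n_1\}^3=(m_1n_1)^{-1/2}e^{-\frac32|\log(m_1/n_1)|}$, restricting to $|\log(m_1/n_1)|\le L$ with $L\asymp\sqrt\lambda\log\lambda$ (two standard deviations) costs only a pointwise factor $e^{-3L/2}=\exp(-O(\sqrt\lambda\log\lambda))=\exp\bigl(o(\sqrt{\log Y/\log_2Y})\bigr)$, while Chebyshev now gives the event probability at least $\tfrac12$. With that modification (and with Chernoff rather than Chebyshev for the $W$-truncation, since the surviving fraction is now $\exp(-O(\sqrt\lambda\log\lambda))$ rather than $1/\mathrm{poly}(\lambda)$), your argument closes.
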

\begin{proof}    This follows directly from Page 97 of \cite{Hou}.\end{proof}


\section{Proof of Theorem \ref{thm1.2}}\label{sec4}
Choose $z=\sqrt{|d|x}\log |d|$, by Lemma \ref{lem2.1} we have
\begin{align*}&\sum_{n\le |d|/x}\chi_d(n)\\&=\frac{\tau(\chi_d)}{2\pi i}\sum_{1\le|m|\le z}\frac{\chi_d(m)}{m}\big(1-e(-m/x)\big)+O(\sqrt{|d|/x})\\
&=\frac{\tau(\chi_d)}{2\pi i}\sum_{1\le|m|\le z}\frac{\chi_d(m)}{m}\big(1-c(m/x)\big)+\frac{\tau(\chi)}{2\pi }\sum_{1\le|m|\le z}\frac{\chi_d(m)}{m}s(m/x)+O(\sqrt{|d|/x}),\end{align*}
where
$$e(a):=e^{2\pi ia},\;\;c(a):=\cos2\pi a,\;\;s(a):=\sin 2\pi a.$$
Let $$C_d(z):=\sum_{|m|\le z}\frac{\chi(m)}{m}\big(1-c(m/x)\big),$$
and
$$S_d(z):=\sum_{|m|\le z}\frac{\chi_d(m)}{m}\big(1-c(m/x)\big),$$
If $\chi_d(-1)=1$, then
$C_\chi=0$.
If $\chi_d(-1)=-1$,  then $S_\chi(z)=0$, and
$$\Big|\sum_{n\le |d|/x}\chi_d(n)\Big|=\frac{\sqrt {|d|}}{2\pi}|C_d(z)|+O(\sqrt{|d|/x}).$$
Thus we have 
\begin{align}\label{polya}
\max_{X<|d|\le 2X\atop d\in\F}\Big|\sum_{n\le |d|/x}\chi_d(n)\Big|\ge\max_{X<|d|\le 2X\atop d\in\F}\frac{\sqrt {|d|}}{2\pi}|C_d(z)|+O(\sqrt{|d|/x}).\end{align}

Let $y=X^{\frac12-\delta}/(2\log z)^2$ and  $\lambda=\sqrt{\log y\log_2y}$, where $0<\delta<\frac{1}{4}$ is any fixed small number. We define the   multiplicative function  $r(\cdot)$ supported on square-free integers as in Lemma \ref{rmrn} by  $$r(p)=\begin{cases}
   \frac{\lambda}{\sqrt p \log p}, &  \lambda^2\le p\le \exp((\log\lambda)^2),\\
   0, & {\rm otherwise.}
   \end{cases}$$  We define the resonator $$R(d):=\sum_{n\leq y}r(n)\chi_d(n),$$and 
$$M_1(R,X):=\sum_{X<|d|\le 2X\atop d\in\F}R(d)^2,$$
$$M_2(R,X):=\sum_{X<|d|\le 2X\atop d\in\F}R(d)^2C_d(z)^2.$$
Then we have 
\[\max_{X<|d|\le 2X\atop d\in\F}C_d(z)^2\geq {\frac{M_2(R,X)}{M_1(R,X)}}.\]
For $M_1$, by Lemma \ref{lem2.2}, we have 
\begin{align*}
M_1(R,X)&=\frac{X}{\zeta(2)}\sum_{m,n\le y\atop mn=\square}r(m)r(n)\prod_{p|mn}\frac{p}{p+1}+O\Big(X^{\frac12+\varepsilon}\sum_{m,n\le y}r(m)r(n)\Big)\\\nonumber
&\le\frac{X}{\zeta(2)}\sum_{m,n\le y\atop mn=\square}r(m)r(n)+O\Big(X^{\frac12+\varepsilon}y\sum_{m\le y}r(m)^2\Big)\\
&=\frac{X}{\zeta(2)}\sum_{m\le y}r(m)^2+O\Big(X^{1-\delta+\varepsilon}\sum_{m\le y}r(m)^2\Big),
\end{align*}
For $M_2$, write $a_k:=(1-c(k/x))/m$, we have
\begin{align*}\label{eq:strtdt}
  &M_2(R,X)\\&=\frac{X}{\zeta(2)}\sum_{1\le|k|,|\ell|\le z}a_ka_\ell\sum_{ m,n\leq y\atop k\ell mn=\square} r(m)r(n)\prod_{p|mnk\ell}\frac{p}{p+1}+O\Big(X^{\frac12+\varepsilon}\sum_{ m,n\leq y\atop k,\ell\leq x}a_ka_\ell r(m)r(n)\Big)
  \\&=\frac{X}{\zeta(2)}\sum_{1\le|k|,|\ell|\le z}a_ka_\ell\sum_{ m,n\leq y\atop k\ell mn=\square}r(m)r(n)\prod_{p|mnk\ell}\frac{p}{p+1}+O\Big(X^{\frac12+\varepsilon}(\log z)^2y\sum_{ m\leq y}r(m)^2\Big)\\
  &=\frac{X}{\zeta(2)}\sum_{1\le|k|,|\ell|\le z}a_ka_\ell\sum_{ m,n\leq y\atop k\ell mn=\square}r(m)r(n)\prod_{p|mnk\ell}\frac{p}{p+1}+O\Big(X^{1-\delta+\varepsilon}\sum_{ m\leq y}r(m)^2\Big)\\
  &\ge\frac{2X}{\zeta(2)}\sum_{k,\ell\le z}a_ka_\ell\sum_{ m,n\leq y\atop mk=n\ell}r(m)r(n)\prod_{p\le X}\frac{p}{p+1}+O\Big(X^{1-\delta+\varepsilon}\sum_{ m\leq y}r(m)^2\Big)\\
  &\ge\frac{2X}{\zeta(2)}(\log X)^{-c}\sum_{k,\ell\le z}a_ka_\ell\sum_{ m,n\leq y\atop mk=n\ell}r(m)r(n)+O\Big(X^{1-\delta+\varepsilon}\sum_{ m\leq y}r(m)^2\Big),
\end{align*}
where we used $y=X^{\frac12-\delta}/(2\log z)^2$, $k\ell mn=\square$ implies $k\ell\ge0$, and
$$\prod_{p|k\ell mn}\frac{p}{p+1}\ge\prod_{p\le X}\frac{p}{p+1}\ge(\log X)^{-c}$$
for some absolute positive $c$. 
Now  we get 
\begin{align*}
   \max_{X<|d|\le 2X\atop d\in\F}C_d(x)^2
   &\geq {\frac{M_2(R,X)}{M_1(R,X)}}\\ 
   &\geq(\log X)^{-c}\sum_{k,\ell\le x}\sum_{ m,n\leq y\atop mk=n\ell}a_ka_\ell r(m)r(n)\Big/\sum_{m\le y}r(m)^2+O(X^{-\delta+\varepsilon})\\
   &\gg x^{-4}(\log X)^{-c}\sum_{k,\ell\le x/2}\sum_{ m,n\leq y\atop mk=n\ell}k\ell r(m)r(n)\Big/\sum_{m\le y}r(m)^2+O(X^{-\delta+\varepsilon}).
\end{align*}
Note that, $mk=n\ell$ implies $k=n_1g$ and $\ell=m_1g$ for some $g$, where $m_1=m/(m,n)$ and $n_1=n/(m,n)$. Thus we have
\begin{align*}&\sum_{k,\ell\le x/2}\sum_{ m,n\leq y\atop mk=n\ell}k\ell r(m)r(n)\\&=\sum_{m,n\le y}r(m)r(n)\sum_{g\le x/\max\{m_1,n_1\}}m_1n_1g^2\\&\gg x^3\sum_{m_1,n_1\le \min\{y,x/2\}\atop(m_1,n_1)=1}\frac{m_1n_1r(m_1)r(n_1)}{\max\{m_1,n_1\}}^3\sum_{d\le\frac{y}{\max\{m_1,n_1\}}\atop(d,m_1n_1)=1}r(d)^2.\end{align*}

On the other hand, trivially we have
$$\sum_{m\le y}r(m)^2\le\prod_{p}(1+r(p)^2).$$
By Lemma \ref{rmrn}, we have
\begin{align*}&\max_{X<|d|\le 2X\atop d\in\F}C_d(x)^2\\&\ge x^{-1}\exp{\bigg((2+o(1))\sqrt{\frac{\log y}{\log_2 y}}}\bigg)\\&=x^{-1}\exp{\bigg((2\sqrt{\tfrac12-\delta}+o(1))\sqrt{\frac{\log X}{\log_2 X}}}\bigg).\end{align*}
At last, by \eqref{polya} we get
$$\max_{X<|d|\le 2X\atop d\in\F}\Big|\sum_{n\le |d|/x}\chi_d(n)\Big|\ge\sqrt{\frac Xx}\exp{\bigg(({\tfrac{\sqrt2}{2}}+o(1))\sqrt{\frac{\log X}{\log_2 X}}}\bigg),$$
since $\delta>0$ is arbitrarily small.

	\section*{Acknowledgements}
	Z. Dong is supported by the Shanghai Magnolia Talent Plan Pujiang Project (Grant No. 24PJD140) and the National
	Natural Science Foundation of China (Grant No. 	1240011770). Y. Song is supported by the China Scholarship Council (CSC) for her study in Austria.

	\normalem

\end{document}